\theoremstyle{definition}
\newtheorem{definition}{Definition}
\newtheorem{assumption}{Assumption}
\theoremstyle{plain}
\newtheorem{theorem}{Theorem}
\newtheorem{prop}{Proposition}
\newtheorem{fact}{Fact}
\newtheorem{correspondence}{Correspondence Principle}
\theoremstyle{remark}
\newtheorem{example}{Example}
\newcommand{\abs}[1]{\lvert #1 \rvert}
\newcommand{\set}[1]{\left\{\, #1 \,\right\}}
\newcommand{\bracket}[1]{\langle #1 \rangle}
\newcommand{\Real}{\mathbf{R}}
\newcommand{\Rd}{\mathbf{R}^d}
\newcommand{\Rn}{\mathbf{R}^n}
\let\div\relax
\DeclareMathOperator{\div}{div}
\DeclareMathOperator{\laplacian}{\Delta}
\DeclareMathOperator*{\minimize}{minimize}
\DeclareSymbolFont{bbold}{U}{bbold}{m}{n}
\DeclareSymbolFontAlphabet{\mathbbold}{bbold}
\newcommand{\calV}{\mathcal{V}}
\newcommand{\calS}{\mathcal{S}}
\newcommand{\calP}{\mathcal{P}}
\newcommand{\eps}{\varepsilon}
\title{A geometric perspective on regularized optimal transport}
\date{\today}
\author{Flavien Léger}
\address{Courant Institute of Mathematical Sciences, New York University, 251 Mercer Street, NY 10012}
\email{leger@cims.nyu.edu}
\thanks{Support is gratefully acknowledged from NSF grants DMS-1211806 and DMS-1311833}
\keywords{Schrödinger bridge, Wasserstein space, optimal transport}
\begin{document}

\begin{abstract}
We present new geometric intuition on dynamical versions of regularized optimal transport. We introduce two families of variational problems on Riemannian manifolds which contain analogues of the Schrödinger bridge problem and the Yasue problem. We also propose an analogue of the Hopf--Cole transformation in the geometric setting. 
\end{abstract}

\maketitle



\section{Introduction}
The purpose of this work is to present new geometric intuition on variants of optimal transport: the  Schrödinger bridge and Yasue problems. Both problems can be seen as fluid-based formulations of regularized optimal transport. We will develop a new viewpoint, discussing them from the perspective of geometry in Wasserstein space. Recall that such geometry-based intuition has proven useful, for instance by recasting some classes of PDEs as gradient flows~\cite{jko98,ottocalculus,ags08}. While our arguments are purely formal at the PDE level (i.e. for the Schrödinger bridge and Yasue problems), they are rigorous in a finite-dimensional setting.

We introduce two families of variational problems on manifolds which contain a direct analogue of the Schrödinger bridge and the Yasue problems. The first family consists of optimal control problems, while the second one comprises Lagrangian mechanics problems. The geometric problems are defined on any Riemannian manifold and involve a potential function; when this manifold is the Wasserstein space and the potential is the entropy then the geometric problems become the optimal transport problems. Yet, in addition to providing a fresh perspective on the Schrödinger bridge and the Yasue problems, the geometric problems have more variety, for instance in the choice of potential functions different from the entropy. Finally, an striking feature of the two optimal transport variants is that their Euler--Lagrange equations can be reduced to a linear PDE via a Hopf--Cole transformation; as an initial effort to harvest information form our new viewpoint we present an generalization of the Hopf--Cole transformation to the geometric setting. 

The Schrödinger bridge problem dates back to Schrödinger (\cite{schrodinger31,schrodinger32,fol88}, see also the survey~\cite{leonardsurvey}), thus preceding modern formulations of optimal transport~\cite{villanibook1,villanibook2,bb00}. Connections between the two topics were made in~\cite{mik04, mt06, mt08, leo12, cgp16}. Also, the Schrödinger bridge problem has been studied as a stochastic optimal control problem~\cite{mik90, dai91}. This is an alternative approach to the one presented in this paper: roughly speaking the extra term which is not present in classical optimal transport has been classically interpreted as a noise term, while we view it as a Wasserstein gradient. Additionally, the Schrödinger bridge problem is equivalent to an entropic regularization of the two-marginal Monge--Kantorovich problem~\cite{rt98, gs10} which has gained popularity in recent years because of fast numerical methods~\cite{cut13}. On the other hand, the Yasue problem is a time-symmetric equivalent of the Schrödinger bridge problem; we call it Yasue following Carlen~\cite{carlenfarischapter}. It has been considered in the literature under different names~\cite{yasue81, carlenfarischapter, cgp16}.

The presentation is mainly divided into two parts: in Section~\ref{sec:models} we introduce our new geometric problems while in Section~\ref{sec:hopfcole} we give a geometric version of the Hopf--Cole transformation. 



\section{Background}
\subsection{Forward and backward Schrödinger bridge problems} \label{sec:schroedinger}
The (forward) Schrödinger bridge problem is the following variational problem
\begin{equation}
\label{eq:SB}
\tag{SB}
\begin{aligned}
& \minimize_{\rho,\,b} & & \int_0^1 \!\! \int\frac{1}{2}\abs{b_t(x)}^2\,\rho_t(x)\,dx\,dt \\
& \text{subject to:} & & \partial_t\rho_t + \div(\rho_t b_t) = \gamma\laplacian\rho_t \quad\text{for } 0<t<1 \,, \\
& & & \rho_0 = \mu\,, \; \rho_1 = \nu
\end{aligned}
\end{equation}
Here $\rho$ is a time-dependent density in the space of probability measures $\calP(\Rd)$ and we denote time-dependence with a subscript: $\rho_t=\rho(t,\cdot)\,$. The vector field $b=b_t(x)$ can be seen as a control used to drive $\rho_t$ from an initial density $\mu\in\calP(\Rd)$ (time $t=0$) to a final density $\nu\in\calP(\Rd)$ (time $t=1$). The parameter $\gamma > 0$ is a fixed diffusion coefficient.
 
Reversing the arrow of time in~\eqref{eq:SB} leads to the backward-in-time Schrödinger bridge problem~\eqref{eq:SBstar}; it amounts to change the sign of the diffusion coefficient $\gamma$ :
\begin{equation}
\label{eq:SBstar}
\tag{SB*}
\begin{aligned}
& \minimize_{\rho,\, b^*} & & \int_0^1 \!\! \int\frac{1}{2}\abs{b^*_t(x)}^2\,\rho_t(x)\,dx\,dt \\
& \text{subject to:} & & \partial_t\rho_t + \div(\rho_t b_t^*) =- \gamma\laplacian\rho_t \quad\text{for } 0<t<1 \,, \\
& & & \rho_0 = \mu\,, \; \rho_1 = \nu
\end{aligned}
\end{equation}

There is a priori no reason why the two problems~\eqref{eq:SB} and~\eqref{eq:SBstar} should be equivalent and it is indeed not intuitively clear, however is it known to be true, as explained in Section~\ref{sec:equivalence}. This phenomenon is closely connected to the time-reversibility property of the related stochastic processes~\cite{bernstein32}.

\subsection{The time-symmetric Yasue problem} \label{sec:yasue}
The Yasue problem is
\begin{equation}
\label{eq:Y}
\tag{Y}
\begin{aligned}
& \minimize_{\rho,\,v}   & & \int_0^1 \!\! \int\frac{1}{2}\abs{v_t(x)}^2\,\rho_t(x) + \frac{\gamma^2}{2}\frac{\abs{\nabla \rho_t(x)}^2}{\rho_t(x)}\,dx \,dt \\
& \text{subject to:} & & \partial_t\rho_t + \div(\rho_t v_t) = 0 \quad\text{for } 0<t<1 \,, \\
& &  & \rho_0 = \mu\,, \; \rho_1 = \nu
\end{aligned}
\end{equation}
Compared to the Schrödinger bridge problem, it is not so much an optimal control problem, but rather a (infinite-dimensional) Lagrangian mechanics problem: one looks for critical points of an action functional $\mathcal{A}=\int_0^1 \! \int\frac{1}{2}\abs{v}^2\,\rho + \frac{\gamma^2}{2}\abs{\nabla \rho}^2/\rho\,dx\,dt\,$ which includes two terms, the \emph{kinetic energy}
\[
\int\frac{1}{2}\abs{v}^2\rho\,dx
\]
and a \emph{potential energy}
\[
-\int \frac{\gamma^2}{2}\frac{\abs{\nabla \rho}^2}{\rho}\,dx
\]

\subsection{Equivalence of the optimal transport problems} \label{sec:equivalence}
\begin{fact}
The Schrödinger bridge problems~\eqref{eq:SB} and~\eqref{eq:SBstar} and the Yasue problem~\eqref{eq:Y} are all equivalent. In particular in terms of minimizers, the densities $\rho$ in~\eqref{eq:SB},~\eqref{eq:SBstar} and~\eqref{eq:Y} are the same.
\end{fact}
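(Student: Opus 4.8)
The plan is to reduce both Schrödinger bridge problems to the Yasue problem \eqref{eq:Y} through a change of variables in the control, in the spirit of a Hopf--Cole (or Nelson current/osmotic) splitting of the drift. Concretely, in \eqref{eq:SB} I would substitute
\[
b_t = v_t + \gamma\,\nabla\log\rho_t ,
\]
which, at fixed $\rho$, is a bijection between admissible pairs $(\rho,b)$ and $(\rho,v)$ (formally, assuming $\rho_t>0$). Since $\gamma\,\div(\rho_t\nabla\log\rho_t)=\gamma\laplacian\rho_t$, the Fokker--Planck constraint $\partial_t\rho_t+\div(\rho_t b_t)=\gamma\laplacian\rho_t$ becomes exactly the continuity equation $\partial_t\rho_t+\div(\rho_t v_t)=0$ of \eqref{eq:Y}, with the same endpoint data $\rho_0=\mu$, $\rho_1=\nu$. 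The analogous substitution with the opposite sign, $b^*_t=v_t-\gamma\,\nabla\log\rho_t$, turns the constraint of \eqref{eq:SBstar} into the same continuity equation.

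Next I would expand the kinetic cost under this substitution,
\[
\tfrac12\abs{b_t}^2\rho_t = \tfrac12\abs{v_t}^2\rho_t + \gamma\, v_t\cdot\nabla\rho_t + \tfrac{\gamma^2}{2}\frac{\abs{\nabla\rho_t}^2}{\rho_t},
\]
so that the first and third terms reproduce the Yasue Lagrangian and it only remains to show that the cross term integrates to a constant fixed by $\mu$ and $\nu$. Using the continuity equation and integration by parts in $x$,
\[
\frac{d}{dt}\int \rho_t\log\rho_t\,dx = \int(\log\rho_t+1)\,\partial_t\rho_t\,dx = \int \nabla\rho_t\cdot v_t\,dx ,
\]
whence $\int_0^1\!\!\int \gamma\, v_t\cdot\nabla\rho_t\,dx\,dt = \gamma\big(\int\nu\log\nu\,dx - \int\mu\log\mu\,dx\big)$. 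Therefore the objective of \eqref{eq:SB} equals that of \eqref{eq:Y} plus this constant, and the objective of \eqref{eq:SBstar} equals that of \eqref{eq:Y} minus the same constant.

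Because in each case the admissible sets are in bijection via a map that leaves $\rho$ unchanged and the objectives differ only by an additive constant depending solely on the endpoints, minimization over $(\rho,b)$, over $(\rho,b^*)$, and over $(\rho,v)$ yield the same optimal curves $\rho$, which is the assertion of the Fact. The computation is essentially routine once the substitution is identified; the only point I would flag as requiring care is the justification of the integrations by parts and of the telescoping identity for the entropy, i.e. that $t\mapsto\int\rho_t\log\rho_t\,dx$ is absolutely continuous with the stated derivative along admissible curves. At the PDE level this is handled under the usual positivity, decay, and finite-entropy hypotheses on $\mu,\nu$, in keeping with the formal standpoint adopted in the paper.
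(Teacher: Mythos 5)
Your proposal is correct and follows essentially the same route as the paper: the substitution $b = v + \gamma\nabla\ln\rho$ (and its sign-flipped counterpart for~\eqref{eq:SBstar}), turning the Fokker--Planck constraint into the continuity equation, with the cross term identified as the time derivative of the entropy and hence a boundary term $\calS(\nu)-\calS(\mu)$. Your explicit treatment of the backward problem via $b^* = v - \gamma\nabla\ln\rho$ is just the paper's observation that~\eqref{eq:Y} is invariant under $\gamma\to-\gamma$, written out in full.
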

\begin{proof}
The heart of the matter is a change of variables $b\to v$. In the Schrödinger bridge problem~\eqref{eq:SB}, introduce the variable $v$ defined by
\begin{equation*}
b = v + \gamma\nabla\ln\rho
\end{equation*}
The constraint then becomes
\begin{equation*}
0 = \partial_t\rho + \div\big(\rho (b - \gamma\nabla\ln\rho\big) = \partial_t\rho + \div(\rho v)
\end{equation*}
while regarding the action functional we can compute
\begin{equation*}
\int \frac{1}{2}\abs{b}^2\rho\,dx = \int \frac{1}{2}\abs{v}^2\rho + \frac{\gamma^2}{2}\frac{\abs{\nabla \rho}^2}{\rho}\,dx + \frac{d}{dt}\calS(\rho)
\end{equation*}
where $\calS$ is the entropy defined by $\calS(\rho) = \gamma \int \rho\,\ln\rho\,dx$. Integrating in time, the action functionals in~\eqref{eq:SB} and~\eqref{eq:Y} match up to a term $\calS(\mu)-\calS(\nu)$ which only depends on the boundary terms. This proves that~\eqref{eq:Y} is nothing else but a version of~\eqref{eq:SB} under a change of variables. 

Since the Yasue problem doesn't see the direction of time (or in other words, changing $\gamma$ to $-\gamma$ doesn't change the problem) this implies that the Yasue problem~\eqref{eq:Y} is also equivalent to the backward-in-time Schrödinger bridge problem~\eqref{eq:SBstar}. 
\end{proof} 



\section{Our geometric viewpoint} \label{sec:models}
In this section we discuss our fresh perspective on the Schrödinger bridge and the Yasue problems, exploring the geometry behind these optimal transport models. We associate to the Schrödinger bridge problem a family of optimal control problems, and to the Yasue problem a family of Lagrangian mechanics problems. 

The general setting is a Riemannian manifold $(M,g)$ together with a potential function $V\colon M\to \Real$. We propose three families of variational problems on manifolds:
\begin{enumerate}[a)]
\item the optimal control problems~\eqref{eq:oc} are associated to the forward-in-time Schrödinger bridge problem~\eqref{eq:SB} ;
\item the optimal control problems~\eqref{eq:ocstar} are associated to the backward-in-time Schrödinger bridge problem~\eqref{eq:SBstar} ;
\item the Lagrangian mechanics problems~\eqref{eq:m} are associated to the Yasue problem~\eqref{eq:Y}.
\end{enumerate}
\begin{equation}
\label{eq:oc}
\tag{oc}
\begin{aligned}
& \minimize_{q,\,b} & & \int_0^1 \frac{1}{2}\abs{b_t}^2\,dt \\
& \text{subject to:} & & \dot{q}_t = b_t - \nabla V(q_t) \quad\text{for } 0<t<1\, , \\
& & & q_0 = y\,, \; q_1 = z
\end{aligned}
\end{equation}
\begin{equation}
\label{eq:ocstar}
\tag{oc*}
\begin{aligned}
& \minimize_{q\,,b^*} & & \int_0^1 \frac{1}{2}\abs{b_t^*}^2\,dt \\
& \text{subject to} & & \dot{q}_t = b_t^* + \nabla V(q_t) \quad\text{for } 0<t<1 \,, \\
& & & q_0 = y\,, \; q_1 = z
\end{aligned}
\end{equation}
\begin{equation}
\label{eq:m}
\tag{m}
\begin{aligned}
& \minimize_{q,\,v}   & & \int_0^1\frac{1}{2}\abs{v_t}^2 + \frac{1}{2}\abs{\nabla V(q_t)}^2\,dt  \\
& \text{subject to} & & \dot{q}_t = v_t \quad\text{for } 0<t<1 \,, \\
& &  & q_0 = y\,, \; q_1 = z
\end{aligned}
\end{equation}

Here $q\colon [0,1]\to M$ is a path on $M$ and for each time $t$ the control $b_t$ and the velocity $v_t$ belong to the tangent space at point $q_t$. The Riemannian norm $\abs{b}^2 = \abs{b}^2_q = g_q(b,b)$ comes from the Riemannian metric $g$ of $M$. The gradient $\nabla V$ of the scalar potential $V$ is the vector field defined for any path $(q_t)$ by: $d/dt \big(V(q_t)\big) = g_{q_t}\big(\nabla V(q_t),\dot q_t\big)$. The endpoints $y\in M$ and $z\in M$ are fixed. 

The geometric problems formally generalize the optimal transport problems: they are equal when the manifold $(M,g)$ is taken to be the Wasserstein space and the potential $V$ to be the entropy (see Correspondence Principle~\ref{prop:linkoptimaltransportgeometry}). In general however we place no restriction on the manifold or the potential which is the reason why we call these problems on manifolds \emph{families} of problems.

Note that the only difference between problems~\eqref{eq:oc} and~\eqref{eq:ocstar} is that the sign of the potential $V$ was changed. The problem~\eqref{eq:m} is a classical mechanics problem in Lagrangian formalism: one looks for critical points of an action functional
\[
\mathcal{A} = \int_0^1\frac{1}{2}\abs{v}^2 + \frac{1}{2}\abs{\nabla V(q)}^2\,dt
\]
which includes the kinetic energy $\frac{1}{2}\abs{v}^2$ and a potential energy $-\frac{1}{2}\abs{\nabla V(q)}^2$.

\begin{correspondence} \label{prop:linkoptimaltransportgeometry}
Consider the geometric problems in the special case where
\begin{enumerate}[1)]
\item the manifold $(M,g)$ is taken to be the Wasserstein space $\calP(\Rd)$ (see def.~\ref{def:wassersteinspace})
\item the potential $V$ is the entropy $$\calS(\rho) = \gamma\int\rho\ln\rho\,dx$$
\end{enumerate}
Then the optimal control problem~\eqref{eq:oc} reduces to the Schrödinger bridge problem~\eqref{eq:SB}, ~\eqref{eq:ocstar} to~\eqref{eq:SBstar} and~\eqref{eq:m} to~\eqref{eq:Y}. In particular, the correspondence is as follows

\begin{center}
\begin{tabular}{| c  c |} \hline
\textbf{Geometric problems} & \textbf{Optimal transport problems} \\ \hline
Riemannian manifold $M$ & Wasserstein space $\calP(\Rd)$ \\ \hline
point $q\in M$ & density $\rho=\rho(x)$ \\ \hline
vector $b\in T_qM$ & vector field $b(x)$ \\ \hline
velocity $v\in T_qM$ & vector field $v(x)$ \\ \hline
norm $\abs{b}_q^2$ & norm $\int \abs{b(x)}^2\,\rho(x)\, dx$ \\ \hline
gradient $\nabla V(q)$ & term $\gamma\nabla \ln\rho$ \\  \hline
endpoints $y$ and $z$ & densities $\mu$ and $\nu$ \\ \hline
\end{tabular}
\end{center}
\end{correspondence}

Recall that the three optimal transport problems~\eqref{eq:SB}, \eqref{eq:SBstar} and~\eqref{eq:Y} are all equivalent. The next proposition establishes that such an equivalence holds for the problems on manifolds, with no restriction on the manifold $(M,g)$ or the potential function $V$, and with a very elementary proof. 

\begin{prop} \label{prop:equivalencegeometricmodels}
The two geometric problems
\begin{equation}
\label{eq:oc}
\tag{oc}
\begin{aligned}
& \minimize_{q,\,b} & & \int_0^1 \frac{1}{2}\abs{b_t}^2\,dt \\
& \text{subject to} & & \dot{q}_t = b_t - \nabla V(q_t) \quad\text{for } 0<t<1 \,, \\
& & & q_0 = y\,, \; q_1 = z
\end{aligned}
\end{equation}
and
\begin{equation}
\label{eq:m}
\tag{m}
\begin{aligned}
& \minimize_{q,\,v}   & & \int_0^1\frac{1}{2}\abs{v_t}^2 + \frac{1}{2}\abs{\nabla V(q_t)}^2\,dt \\
& \text{subject to} & & \dot{q}_t = v_t \quad\text{for } 0<t<1 \,, \\
& &  & q_0 = y\,, \; q_1 = z
\end{aligned}
\end{equation}
are equivalent. The heart of the matter is a change of variables $(q,b)\to(q,v)$ with $v=b-\nabla V(q)$. 

Since replacing $V$ by $-V$ doesn't change the Lagrangian mechanics problem~\eqref{eq:m}, we can deduce that the two problems~\eqref{eq:m} and~\eqref{eq:ocstar} are also equivalent. As a consequence, all three geometric problems are equivalent.
\end{prop}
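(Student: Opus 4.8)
The plan is to transplant the change-of-variables argument from the proof of the Fact above into the finite-dimensional Riemannian setting. First I would set up the bijection between admissible pairs: given $(q,b)$ satisfying the constraints of~\eqref{eq:oc}, define $v_t := b_t - \nabla V(q_t)\in T_{q_t}M$; then $\dot q_t = b_t - \nabla V(q_t) = v_t$, so $(q,v)$ is admissible for~\eqref{eq:m}, and conversely $b_t := v_t + \nabla V(q_t)$ recovers an admissible pair for~\eqref{eq:oc}. This map is, fiberwise, the affine shift by $\nabla V(q_t)$ in each tangent space $T_{q_t}M$; it is therefore a bijection of the two admissible sets, and it leaves the path $q$ — hence the boundary conditions $q_0=y$, $q_1=z$ — unchanged.

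Next I would compare the two cost functionals along corresponding pairs. Expanding the Riemannian norm,
\[
\tfrac12\abs{b_t}^2 = \tfrac12\abs{v_t}^2 + g_{q_t}\!\big(v_t,\nabla V(q_t)\big) + \tfrac12\abs{\nabla V(q_t)}^2 .
\]
The cross term is a total time derivative: since $v_t=\dot q_t$, the defining property of the gradient gives $g_{q_t}\big(\dot q_t,\nabla V(q_t)\big) = \tfrac{d}{dt}\big(V(q_t)\big)$. Integrating over $[0,1]$ and using the fundamental theorem of calculus,
\[
\int_0^1\tfrac12\abs{b_t}^2\,dt = \int_0^1\Big(\tfrac12\abs{v_t}^2 + \tfrac12\abs{\nabla V(q_t)}^2\Big)\,dt + V(z) - V(y) .
\]
Since $y$ and $z$ are fixed, the correction $V(z)-V(y)$ is a constant independent of the competitor. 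Hence the two functionals differ by this constant along corresponding pairs, so $(q,b)$ is a minimizer — or more generally a critical point — of~\eqref{eq:oc} if and only if $(q,v)$ is one of~\eqref{eq:m}; in particular the optimal, and more generally the critical, paths $q$ coincide.

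Finally, for~\eqref{eq:ocstar}: only $\abs{\nabla V}^2$ enters the cost of~\eqref{eq:m} and the constraint $\dot q=v$ does not involve $V$, so replacing $V$ by $-V$ leaves~\eqref{eq:m} literally unchanged; and~\eqref{eq:ocstar} written with potential $V$ is exactly~\eqref{eq:oc} written with potential $-V$ (rename $b^*$ as $b$; the sign in front of $\nabla V$ flips). Chaining the equivalence just established, applied once with $V$ and once with $-V$, gives \eqref{eq:ocstar}$\,\equiv\,$\eqref{eq:oc} with $-V$$\,\equiv\,$\eqref{eq:m} with $-V$$\,=\,$\eqref{eq:m} with $V$$\,\equiv\,$\eqref{eq:oc}, so all three geometric problems are equivalent.

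I do not expect a genuine obstacle: the content is the one-line identity above together with the observation that the boundary term is frozen by the fixed endpoints. The only points warranting a moment of care are that the change of variables is a true bijection of the admissible sets — clear because it is an affine isomorphism on each tangent fiber — and that, since it alters the action only by an additive constant, it preserves not merely minimizers but the entire set of critical points of the action, which is the relevant notion for the Lagrangian problem~\eqref{eq:m}.
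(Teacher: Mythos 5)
Your proposal is correct and follows essentially the same route as the paper: the change of variables $v = b - \nabla V(q)$, the expansion $\tfrac12\abs{b}^2 = \tfrac12\abs{v}^2 + \tfrac12\abs{\nabla V(q)}^2 + \tfrac{d}{dt}V(q_t)$, and the observation that the total-derivative term integrates to a boundary constant. The extra care you take (bijection of admissible sets, preservation of critical points, the $V\to -V$ chain for~\eqref{eq:ocstar}) is consistent with, and slightly more explicit than, the paper's argument.
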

\begin{proof}
In the optimal control problem~\eqref{eq:oc} perform the change of variables $b = v + \nabla V(q)$. Then 
\begin{equation*}
\frac{1}{2} \abs{b_t}^2 = \frac{1}{2} \abs{v_t}^2 + \frac{1}{2}\abs{\nabla V(q_t)}^2 + \frac{d}{dt}V(q_t)
\end{equation*}
and integrating in time, the last term $\frac{d}{dt} V(q_t)$ only depends on the boundary terms.
\end{proof}

We would like to emphasize that Prop.~\ref{prop:equivalencegeometricmodels} says that the equivalence of the different geometric problems (and by analogy the equivalence of the optimal transport problems) is stronger than the equality of their critical points: it is essentially one minimization problem under different changes of variables $(q,b) \leftrightarrow (q,v) \leftrightarrow (q,b^*)$.

We now include a list of examples.

\begin{example}
In the geometric problems~\eqref{eq:oc} or~\eqref{eq:m}, taking the potential $V$ to be identically $0$ results in a \emph{geodesic} problem
\[
\begin{aligned}
& \minimize_{q,v} & & \int_0^1 \frac{1}{2}\abs{v_t}^2\,dt \\
& \text{subject to} & & \dot{q}_t = v_t \quad\text{for } 0<t<1 \,, \\
& & & q_0 = y\,, \; q_1 = z
\end{aligned}
\]
The optimal transport analogue is the Benamou--Brenier formula.
\end{example}

\begin{example}
In the optimal transport setting $M=\calP(\Rd)$, choose the potential
\[
\calV(\rho) = \int f(x)\rho(x)\,dx
\]
for some function $f\colon\Rd\to\Real$. The optimal control problem is
\begin{equation*}
\begin{aligned}
& \minimize_{\rho,\,b} & & \int_0^1 \!\! \int\frac{1}{2}\abs{b}^2\,\rho\,dx\,dt \\
& \text{subject to:} & & \partial_t\rho + \div\big(\rho (b-\nabla f)\big) = 0 \quad\text{for } 0<t<1 \,, \\
& & & \rho_0 = \mu\,, \; \rho_1 = \nu
\end{aligned}
\end{equation*}
while the equivalent mechanics problem reads
\begin{equation*}
\begin{aligned}
& \minimize_{\rho,\,v} & & \int_0^1 \!\! \int\frac{1}{2}\abs{v}^2\,\rho + \frac{1}{2}\abs{\nabla f}^2\,\rho\,dx\,dt \\
& \text{subject to:} & & \partial_t\rho + \div(\rho v) = 0 \quad\text{for } 0<t<1 \,, \\
& & & \rho_0 = \mu\,, \; \rho_1 = \nu
\end{aligned}
\end{equation*}
\end{example}

\begin{example}
In the optimal transport setting $M=\calP(\Rd)$, choose the potential
\[
\calV(\rho) = \gamma\int \frac{\rho^m}{m-1}\,dx
\]
with $m>1$. This is the functional considered by Otto whose gradient flow in the Wasserstein metric is the porous medium equation~\cite{ottocalculus}. The optimal control transport problem is then
\begin{equation*}
\begin{aligned}
& \minimize_{\rho,\,b} & & \int_0^1 \!\! \int\frac{1}{2}\abs{b}^2\,\rho\,dx\,dt \\
& \text{subject to:} & & \partial_t\rho + \div(\rho b) = \gamma\laplacian(\rho^m) \quad\text{for } 0<t<1 \,, \\
& & & \rho_0 = \mu\,, \; \rho_1 = \nu
\end{aligned}
\end{equation*}
There is now a nonlinear diffusion term $\gamma\laplacian(\rho^m)$. Note that taking $m\to 1$ recovers the Schrödinger bridge problem. The equivalent Lagrangian mechanics formulation is:
\begin{equation*}
\begin{aligned}
& \minimize_{\rho,\,v}   & & \int_0^1 \!\! \int\frac{1}{2}\abs{v}^2\,\rho + \frac{\gamma^2 m^2}{2}\abs{\nabla \rho}^2 \rho^{2m-3}\,dx \,dt \\
& \text{subject to:} & & \partial\rho + \div(\rho v) = 0 \quad\text{for } 0<t<1 \,, \\
& &  & \rho_0 = \mu\,, \; \rho_1 = \nu
\end{aligned}
\end{equation*}
\end{example}



\section{Hopf--Cole transformation} \label{sec:hopfcole}
In this section we propose an analogue of the Hopf--Cole transformation in the context of our variational problems on manifolds. We show in Theorem~\ref{th:mainthm} and Prop.~\ref{prop:elnox} that this transformation exists under certain restrictions on the metric and the potential, and that several remarkable properties of the optimal transport case are visible in the geometric setting. 

\subsection{Background on Hopf--Cole in the optimal transport problems} \label{sec:backgroundhopfcole}
The Euler--Lagrange equations of the Schrödinger bridge problem are given by the nonlinear system:
\begin{equation} \label{eq:eulerlagrangeschroedinger}
\begin{cases}
\partial_t\phi + \frac{1}{2} \abs{\nabla\phi}^2  =- \gamma\laplacian\phi \\
\partial_t\rho+\div(\rho\nabla\phi)=\gamma\laplacian\rho
\end{cases}
\end{equation}
where the scalar function $\phi$ is such that $b_t(x)=\nabla\phi_t(x)$, together with boundary conditions $\rho_0=\mu\,,\rho_1=\nu$. The first equation in~\eqref{eq:eulerlagrangeschroedinger} is a Hamilton--Jacobi--Bellman equation while the second one is the continuity equation. This system appears to be rather difficult since it is coupled in variables $\rho$ and $\phi$ and because of the atypical boundary conditions. Note however that the Hamilton--Jacobi--Bellman equation doesn't depend on $\rho$ ; we give a geometric analogue of this phenomenon in Prop.~\ref{prop:elnox}. Remarkably, the following Hopf--Cole change of variables
\begin{equation} \label{eq:hopfcoleschrodinger}
\begin{cases}
\eta = \exp\big(\phi/(2\gamma)\big) \\
\eta^* = \rho \exp\big(-\phi/(2\gamma)\big)
\end{cases}
\end{equation}
reduces the Euler--Lagrange equations~\eqref{eq:eulerlagrangeschroedinger} to a simple system of a backward and forward heat equation
\begin{equation} \label{eq:schrodingersystem}
\begin{cases}
\partial_t\eta = -\gamma \laplacian\eta \\
\partial_t\eta^* = \gamma \laplacian\eta^*
\end{cases}
\end{equation}

Likewise, note that since the Yasue problem is nothing but the Schrödinger bridge problem under a change of variables, we can write a version of the Hopf--Cole transformation. More specifically, the Euler--Lagrange equations of the Yasue problem take the form
\begin{equation*}
\begin{cases}
\partial_t\psi + \frac{1}{2}\abs{\nabla\psi}^2=-2\gamma^2\,(\laplacian\sqrt{\rho}) / \sqrt{\rho} \\
\partial_t\rho+\div(\rho\nabla\psi)=0
\end{cases}
\end{equation*}
where the scalar function $\psi$ is such that $v_t(x)=\nabla\psi_t(x)$. The Hopf--Cole transformation can be written as
\begin{equation} \label{eq:hopfcoleyasue}
\begin{cases}
\eta = \sqrt{\rho}\exp\big(\psi/(2\gamma)\big) \\
\eta^* = \sqrt{\rho} \exp\big(-\psi/(2\gamma)\big)
\end{cases}
\end{equation}

The heat equation system~\eqref{eq:schrodingersystem} can easily be integrated in time. Therefore locating a minimizer of the Schrödinger bridge problem amounts to finding a solution $(\eta^*_0,\eta_1)$ of the so-called Schrödinger system
\begin{equation*}
\begin{cases}
\mu(x) = \eta^*_0(x) \int K(x,y)\,\eta_1(y)\,dy \\
\nu(y) = \eta_1(y)\int K(x,y)\,\eta^*_0(x)\,dx
\end{cases}
\end{equation*}
where $K(x,y) = \frac{1}{(4\pi\gamma)^{d/2}} \exp\Big(\!-\frac{\abs{x-y}^2}{4\gamma}\Big)$ is a heat kernel. Solutions of the Schrödinger system have been studied in~\cite{fortet1940,beurling1960,jamison1975}.
 
\subsection{An analogue of the Hopf--Cole transformation for the geometric problems}
\subsubsection*{Euler--Lagrange equations of the geometric problems} \label{sec:hopfcoleanalogue}
We recall the two geometric problems~\eqref{eq:oc} and~\eqref{eq:m} introduced in Section~\ref{sec:models} and record their Euler--Lagrange equations:
\begin{fact}
The Euler--Lagrange equations of the optimal control problem
\begin{equation}
\label{eq:oc}
\tag{oc}
\begin{aligned}
& \minimize_{q,\,b} & & \int_0^1 \frac{1}{2}\abs{b_t}^2\,dt \\
& \text{subject to:} & & \dot{q}_t = b_t - \nabla V(q_t) \quad\text{for } 0<t<1\, , \\
& & & q_0 = y\,, \; q_1 = z
\end{aligned}
\end{equation}
are given by
\begin{equation} \label{eq:eulerlagrangeoc}
D_tb = \nabla_b(\nabla V)
\end{equation}
The Euler--Lagrange equations of the Lagrangian mechanics problem
\begin{equation}
\label{eq:m}
\tag{m}
\begin{aligned}
& \minimize_{q,\,v}   & & \int_0^1\frac{1}{2}\abs{v_t}^2 + \frac{1}{2}\abs{\nabla V(q_t)}^2\,dt  \\
& \text{subject to} & & \dot{q}_t = v_t \quad\text{for } 0<t<1 \,, \\
& &  & q_0 = y\,, \; q_1 = z
\end{aligned}
\end{equation}
are given by
\begin{equation} \label{eq:eulerlagrangem}
D_t v = \nabla\left(\frac{\abs{\nabla V}^2}{2}\right)
\end{equation}
\end{fact}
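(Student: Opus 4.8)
The plan is to remove the control variable in each problem, reducing it to an unconstrained variational problem over paths $q$ with fixed endpoints, and then compute a first variation using the standard calculus of variations on a Riemannian manifold. For the Lagrangian mechanics problem~\eqref{eq:m}, substituting $v_t = \dot q_t$ turns it into the problem of finding critical points of $\mathcal A(q) = \int_0^1 \tfrac12\abs{\dot q_t}^2 + \tfrac12\abs{\nabla V(q_t)}^2\,dt$ among paths with $q_0 = y$, $q_1 = z$. I would take a smooth variation $Q(\eps,t)$ with $Q(0,\cdot) = q$ and infinitesimal variation $w = \partial_\eps Q|_{\eps=0}$ vanishing at $t=0,1$, and use two standard facts: since the Levi-Civita connection is torsion-free, $\tfrac{D}{\partial\eps}\partial_t Q = \tfrac{D}{\partial t}\partial_\eps Q$, so the $\eps$-derivative of $\dot q$ is $D_t w$; and the chain rule along the variation gives $\tfrac{D}{\partial\eps}\big(\nabla V(Q)\big)\big|_{\eps=0} = \nabla_w(\nabla V)$. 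Differentiating $\mathcal A$ and using metric compatibility, $\tfrac{d}{d\eps}\mathcal A\big|_{\eps=0} = \int_0^1 g(\dot q, D_t w) + g(\nabla V(q), \nabla_w \nabla V)\,dt$. The first term integrates by parts — the boundary contribution vanishes because $w(0)=w(1)=0$ — to $-\int_0^1 g(D_t\dot q, w)\,dt$; for the second, metric compatibility again gives $g(\nabla V, \nabla_w\nabla V) = \tfrac12 w\big(\abs{\nabla V}^2\big) = g\big(w,\nabla(\abs{\nabla V}^2/2)\big)$. The fundamental lemma of the calculus of variations then yields $D_t v = D_t\dot q = \nabla\big(\abs{\nabla V}^2/2\big)$, which is~\eqref{eq:eulerlagrangem}.

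For the optimal control problem~\eqref{eq:oc}, the quickest route is to note that along any path $g\big(\dot q, \nabla V(q)\big) = \tfrac{d}{dt}V(q)$, so after substituting $b_t = \dot q_t + \nabla V(q_t)$ the objective equals $\int_0^1 \tfrac12\abs{\dot q_t}^2 + \tfrac12\abs{\nabla V(q_t)}^2\,dt + V(z) - V(y)$; the boundary term is constant, so~\eqref{eq:oc} and~\eqref{eq:m} have the same critical paths (this is exactly Prop.~\ref{prop:equivalencegeometricmodels}). Transporting the Euler--Lagrange equation back to the variable $b = \dot q + \nabla V(q)$: using $D_t\big(\nabla V(q)\big) = \nabla_{\dot q}\nabla V$ and the symmetry of the Hessian $g(\nabla_X\nabla V, Y) = g(\nabla_Y\nabla V, X)$ (a consequence of torsion-freeness, which in particular gives $\nabla_{\nabla V}\nabla V = \nabla(\abs{\nabla V}^2/2)$), one computes $D_t b = D_t\dot q + \nabla_{\dot q}\nabla V = \nabla(\abs{\nabla V}^2/2) + \nabla_b\nabla V - \nabla_{\nabla V}\nabla V = \nabla_b\nabla V$, which is~\eqref{eq:eulerlagrangeoc}. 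Alternatively one may vary~\eqref{eq:oc} directly: with $b = \dot q + \nabla V(q)$ the first variation is $\int_0^1 g(b, D_t w + \nabla_w\nabla V)\,dt$, integration by parts turns the first term into $-\int_0^1 g(D_t b, w)\,dt$, and the Hessian symmetry rewrites $g(b,\nabla_w\nabla V) = g(w,\nabla_b\nabla V)$, giving the same conclusion.

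I expect the only genuine subtlety to be the careful bookkeeping of covariant derivatives of the vector field $\nabla V$ along the variation, together with the use of the symmetry of the Hessian to pass from $\nabla_w\nabla V$ paired against $b$ to $\nabla_b\nabla V$ paired against $w$ — this is precisely what makes the right-hand side of~\eqref{eq:eulerlagrangeoc} equal to $\nabla_b\nabla V$ rather than $\nabla_{\dot q}\nabla V$. Everything else is the usual derivation of geodesic-type Euler--Lagrange equations; one could equally well establish~\eqref{eq:eulerlagrangeoc} first by the direct variation and then deduce~\eqref{eq:eulerlagrangem} via the change of variables, the two derivations being equivalent.
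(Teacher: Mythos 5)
Your proposal is correct, and there is nothing in the paper to weigh it against: the paper records these Euler--Lagrange equations as a Fact without proof (its Section~\ref{sec:proofs} takes \eqref{eq:eulerlagrangeoc} as the starting point). Your derivation supplies the missing argument in exactly the paper's spirit: eliminating the control/velocity and computing the first variation with the three standard ingredients (torsion-freeness to swap $\tfrac{D}{\partial\eps}\partial_tQ$ and $\tfrac{D}{\partial t}\partial_\eps Q$, metric compatibility for the integration by parts and for $g(\nabla V,\nabla_w\nabla V)=g\big(w,\nabla(\abs{\nabla V}^2/2)\big)$, and the symmetry of the Hessian to convert $\nabla_w\nabla V$ paired with $b$ into $\nabla_b\nabla V$ paired with $w$), while the passage between \eqref{eq:eulerlagrangeoc} and \eqref{eq:eulerlagrangem} via $b=\dot q+\nabla V(q)$ and the cancellation $\nabla_{\nabla V}\nabla V=\nabla(\abs{\nabla V}^2/2)$ is precisely the change of variables of Prop.~\ref{prop:equivalencegeometricmodels}. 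The elimination of $b$ is legitimate here because the dynamics determine $b$ uniquely from $(q,\dot q)$, so no Pontryagin-type machinery is needed; both of your routes (transporting the equation from \eqref{eq:m}, or varying \eqref{eq:oc} directly) are sound.
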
 

Here the symbol $\nabla$ can have two related but distinct meanings:
\begin{enumerate}[a)]
\item for a \emph{scalar} function $f\colon M\to\Real$, the quantity $\nabla f$ is the \emph{gradient} of $f$. It is thus a vector field on $M$. 
\item for two \emph{vector fields} $u$ and $w$ on $M$, the quantity $\nabla_u w$ is the \emph{covariant derivative} of $w$ along $u$.
\end{enumerate}
Moreover $D_t$ denotes the covariant derivative along the path $(q_t)$ : $D_t=\nabla_{\dot{q}}\,$.

\subsubsection*{Analogue of the Hopf--Cole transformation}
The Hopf--Cole transformation in the optimal transport problems given by~\eqref{eq:hopfcoleschrodinger} or~\eqref{eq:hopfcoleyasue} maps two coupled, nonlinear PDEs into two simple uncoupled linear PDEs, a remarkable situation. In order to give a version of the Hopf--Cole transformation suited for the geometric models~\eqref{eq:oc} and~\eqref{eq:m}, we obviously need to consider restrictions on the metric $g$ and the potential $V$. The setting of this section is thus the following: assume that a global \emph{coordinate chart}
\[
(q^1,\dots,q^n)\colon M\to \Rn
\]
is introduced on the $n$-dimensional manifold $(M,g)$, define $e_i = \partial_i = \partial / \partial_{q^i}$ and $(\eps^j)_j$ the dual basis of $(e_i)_i$. 
Therefore the $e_i$'s are vector fields, the $\eps^j$'s are covector fields, and $\bracket{\eps^j,e_i} = \delta_i^j$. The restrictions that we consider on the metric $g$ and the potential $V$ take the following form:

\begin{assumption} \label{as:metric}
The quantity $\partial_i g^{jk}(q^1,\dots,q^n)$ does not depend on $(q^1,\dots,q^n)$, for any indices $i,j,k$.
\end{assumption}

\begin{assumption} \label{as:potential}
The quantity $\partial_i \big(g^{jk}(q^1,\dots,q^n)\partial_kV(q^1,\dots,q^n)\big)$ does not depend on $(q^1,\dots,q^n)$, for any indices $i,j$.
\end{assumption}
Here, as is standard notation in differential geometry, the tensor $g^{jk}$ denotes the inverse of the metric: $g_{ij} g^{jk} = \delta_i^k$. Note that the quantities present in the assumptions involve at the same time the metric, the potential and the coordinates. 

Before stating the main theorem of this section, let us highlight some features of the Euler--Lagrange equations of the optimal transport problems~\eqref{eq:SB} and~\eqref{eq:Y} (Section~\ref{sec:backgroundhopfcole}):
\begin{enumerate}[a)]
\item The Euler--Lagrange equations of the optimal transport problems are two coupled equations (in variables $(\rho,\phi)$ or $(\rho,\psi)$). The Hopf--Cole transformation results in two uncoupled equations~\eqref{eq:schrodingersystem} in variables $(\eta,\eta^*)$. 
\item Furthermore the resulting equations~\eqref{eq:schrodingersystem} are each simpler than the ones before applying the Hopf--Cole transformation.
\item The Hamilton--Jacobi--Bellman equation in~\eqref{eq:eulerlagrangeschroedinger} doesn't depend on the variable $\rho$ but only on $\phi$. 
\end{enumerate}
All these particularities have analogues in the geometric setting:

\begin{theorem} \label{th:mainthm}
Suppose that assumptions~\textup{\ref{as:metric}} and~\textup{\ref{as:potential}} hold, and that in addition
\begin{itemize}
\item the map $(q^1,\dots,q^n) \to \big(\partial_iV(q^1,\dots,q^n)\big)_i$ is invertible 
\item the matrix of partial derivatives $\big(\partial^2_{ij}V(q^1,\dots,q^n)\big)_{ij}$ is injective at each point $(q^1,\dots,q^n)\in \Rn$.
\end{itemize}
Then there exists a transformation $(q,b)\to(\eta, \eta^*)$ such that the Euler--Lagrange equations~\eqref{eq:eulerlagrangeoc} of the optimal control problem~\eqref{eq:oc} can be written as a system of a backward and a forward gradient flow on $M$:
\begin{equation*}
\begin{cases}
\dot{\eta} = \nabla V(\eta) \\
\dot{\eta^*} = -\nabla V(\eta^*)
\end{cases}
\end{equation*}
This transformation, an analogue of Hopf--Cole in the geometric problems, is given by
\begin{equation*}
\begin{cases}
\quad 2 \partial_i V(\eta) = g_{ij}(q)b^j  \\
- 2\partial_i V(\eta^*) = g_{ij}(q)b^j - 2 \partial_i V(q)
\end{cases}
\end{equation*}
Furthermore, since the problems~\eqref{eq:oc} and~\eqref{eq:m} have the same Euler--Lagrange equations (up to a change of variables), we can alternatively write for the geometric problem~\eqref{eq:m} a transformation $(q,v)\to(\eta, \eta^*)\,$ given by
\begin{equation*}
\begin{cases}
\quad 2 \partial_i V(\eta) = g_{ij}(q)v^j + \partial_iV(q) \\
-2\partial_i V(\eta^*) = g_{ij}(q)v^j - \partial_iV(q)
\end{cases}
\end{equation*}

\end{theorem}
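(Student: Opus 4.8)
The plan is to work throughout in the given global chart $(q^1,\dots,q^n)$ and to pass the Euler--Lagrange equation~\eqref{eq:eulerlagrangeoc} of~\eqref{eq:oc} into Hamiltonian form. Introducing the costate $p_i = g_{ij}(q)b^j$ conjugate to $q$ — which is exactly the quantity appearing on the left of the claimed transformation — the optimal control is $b^i = g^{ij}(q)p_j$, the reduced Hamiltonian is $\bar H(q,p)=\tfrac12 g^{ij}(q)p_ip_j - g^{ij}(q)p_i\,\partial_jV(q)$, and the canonical equations read
\[
\dot q^i = g^{ij}(q)\big(p_j-\partial_jV(q)\big),\qquad \dot p_i = -\tfrac12\,\partial_i g^{jk}(q)\,p_jp_k + \partial_i\!\big(g^{jk}(q)\partial_kV(q)\big)p_j.
\]
The first observation is that Assumptions~\ref{as:metric} and~\ref{as:potential} say precisely that $A^{jk}_i:=\partial_i g^{jk}$ and $B^{j}_i:=\partial_i\!\big(g^{jk}\partial_kV\big)$ are \emph{constant}; hence the costate obeys the closed, constant-coefficient quadratic ODE $\dot p_i = -\tfrac12 A^{jk}_i p_jp_k + B^j_i p_j$.

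Next I would bring in the transformation. Because $(q^i)\mapsto(\partial_iV(q))_i$ is assumed invertible, the two relations in the statement define $\eta$ and $\eta^*$ uniquely along the trajectory; writing $\pi_i:=\partial_iV(\eta)$ and $\pi^*_i:=\partial_iV(\eta^*)$, they say $\pi_i=\tfrac12 p_i$ and $\pi^*_i=-\tfrac12 p_i+\partial_iV(q)$. The key algebraic identity, obtained by expanding the constant $B^j_i$ of Assumption~\ref{as:potential}, is
\[
g^{jk}(x)\,\partial^2_{ik}V(x) = B^j_i - A^{jk}_i\,\partial_kV(x),
\]
valid at every point $x$, in particular at $\eta$, at $\eta^*$, and at $q$. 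Evaluating it at $\eta$, contracting with $\pi_j$ and using $\partial_kV(\eta)=\pi_k$ gives $\partial^2_{ik}V(\eta)\,\big(\nabla V(\eta)\big)^k = B^j_i\pi_j - A^{jk}_i\pi_j\pi_k$. On the other hand, differentiating $\pi_i=\tfrac12 p_i$ in time and using the quadratic costate ODE gives $\dot\pi_i = B^j_i\pi_j - A^{jk}_i\pi_j\pi_k$ as well, while the chain rule on $\pi_i=\partial_iV(\eta)$ gives $\dot\pi_i=\partial^2_{ik}V(\eta)\,\dot\eta^k$. Subtracting, $\partial^2_{ik}V(\eta)\big(\dot\eta^k-(\nabla V(\eta))^k\big)=0$ for all $i$, and the assumed injectivity of the Hessian $\big(\partial^2_{ij}V\big)$ forces $\dot\eta=\nabla V(\eta)$.

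For $\eta^*$ I would run the same computation: differentiate $\pi^*_i=-\tfrac12 p_i+\partial_iV(q)$, substituting the costate ODE for $\dot p_i$ and $\dot q^i=g^{ij}(p_j-\partial_jV)$ together with the key identity at $q$ to evaluate $\frac{d}{dt}\partial_iV(q)=\partial^2_{ik}V(q)\dot q^k$; after expanding and relabelling the indices carried by the symmetric tensors $A$ and $g$, everything collapses to $\partial^2_{ik}V(\eta^*)\big(\dot{\eta^*}^k+(\nabla V(\eta^*))^k\big)=0$, whence $\dot{\eta^*}=-\nabla V(\eta^*)$, again by injectivity of the Hessian. Finally, for the version adapted to~\eqref{eq:m} I would invoke Prop.~\ref{prop:equivalencegeometricmodels}: under $v=b-\nabla V(q)$, i.e.\ $v^j=b^j-g^{jk}(q)\partial_kV(q)$, one has $g_{ij}(q)b^j=g_{ij}(q)v^j+\partial_iV(q)$, and substituting this into the two defining relations of $(\eta,\eta^*)$ produces exactly the stated $(q,v)\to(\eta,\eta^*)$ formulas, with the same conclusion for the flow.

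The content of the theorem — and where care is needed — is the coincidence that the constant tensors $A$ and $B$ forced by Assumptions~\ref{as:metric}--\ref{as:potential} in the costate equation are the very same tensors governing the expansion of $\partial^2V\cdot\nabla V$ through Assumption~\ref{as:potential}; this is what makes the first-order flow $\dot\eta=\nabla V(\eta)$ reproduce, after one further differentiation, the second-order dynamics of $\pi=\partial V(\eta)$. The two auxiliary hypotheses play complementary roles: invertibility of $q\mapsto(\partial_iV(q))$ is what lets $\eta$ and $\eta^*$ be \emph{defined}, while injectivity of the Hessian $(\partial^2_{ij}V)$ is what lets one cancel it to descend from the second-order identity back to a first-order gradient flow. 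I expect no single hard step, only bookkeeping; the most error-prone point is the sign tracking in the $\eta^*$ computation, which I would cross-check against the optimal transport case $M=\calP(\Rd)$, $V=\calS$, where $\eta,\eta^*$ must reduce to~\eqref{eq:hopfcoleschrodinger} and the two flows to the backward/forward heat equations~\eqref{eq:schrodingersystem}.
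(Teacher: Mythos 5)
Your proposal is correct and follows essentially the same route as the paper: the costate $p_i=g_{ij}(q)b^j$ is exactly the covector $\phi=\flat(b)$ of Prop.~\ref{prop:elnox}, your quadratic costate ODE is the paper's equation~\eqref{eq:eulerlagrangeocphi} (which you re-derive via the Hamiltonian formalism instead of citing it), and the core step --- differentiating $2\partial_iV(\eta)=\phi_i$, expanding the constant $\partial_i(g^{jk}\partial_jV)$ evaluated at $\eta$, cancelling, and invoking injectivity of the Hessian --- is identical. Your sketched $\eta^*$ computation and the substitution $g_{ij}b^j=g_{ij}v^j+\partial_iV(q)$ for the~\eqref{eq:m} version also check out, matching the paper's (equally brief) treatment of those parts.
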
 

We delay the proof of this theorem until Section~\ref{sec:proofs}. This result gives an analogue of points a) and b) in the geometric setting. Indeed note that after the transformation the equation in $\eta$ (resp. $\eta^*$) is simpler since it is a first order ODE with a gradient flow structure. Moreover the assumptions~\ref{as:metric} and~\ref{as:potential} are also sufficient to generalize point c):

\begin{prop} \label{prop:elnox}
If Assumptions~\textup{\ref{as:metric}} and~\textup{\ref{as:potential}} hold, then the Euler--Lagrange equations~\eqref{eq:eulerlagrangeoc} of the optimal control problem~\eqref{eq:oc} do not depend on $q$ when written in coordinates. 

More precisely, define $\phi=\flat(b) = g(b,\cdot)$ the covector field obtained by lowering indices of $b$. The Euler--Lagrange equations~\eqref{eq:eulerlagrangeoc} written in terms of $\phi$ are
\begin{equation} \label{eq:eulerlagrangeocphi}
\dot\phi_i + \tfrac{1}{2} \, (\partial_ig^{jk})\phi_j\phi_k = \partial_i(g^{jk}\partial_jV)\phi_k
\end{equation}
Note that the two terms $\tfrac{1}{2} \, (\partial_ig^{jk})$ and $\partial_i(g^{jk}\partial_jV)$ are precisely the ones covered by Assumptions~\textup{\ref{as:metric}} and~\textup{\ref{as:potential}}: they are thus constants that do not depend on $q$.
\end{prop}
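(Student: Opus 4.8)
The plan is to rewrite the coordinate‑free Euler--Lagrange equation~\eqref{eq:eulerlagrangeoc}, $D_tb = \nabla_b(\nabla V)$, together with the constraint $\dot q_t = b_t - \nabla V(q_t)$, in the global chart $(q^1,\dots,q^n)$, after lowering the index of $b$. The cleanest way to organize the computation is through the Hamiltonian formulation of the optimal control problem~\eqref{eq:oc}; a purely hands‑on alternative is indicated at the end.

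First I would form the maximized Pontryagin Hamiltonian of~\eqref{eq:oc}. With a costate covector $p$ and control $b$, put $\mathcal H(q,p,b) = \bracket{p,\, b - \nabla V(q)} - \tfrac12\abs{b}^2$. Since $b \mapsto \mathcal H(q,p,b)$ is strictly concave, it has a unique maximizer, characterised by $\flat(b) = p$, i.e.\ $b^i = g^{ij}p_j$ and $p = \flat(b) = \phi$; the reduced Hamiltonian is then
\[
H(q,p) = \tfrac12\, g^{jk}(q)\, p_j p_k - g^{jk}(q)\,\bigl(\partial_j V(q)\bigr)\, p_k .
\]
(Equivalently, $H$ is the Legendre transform in $\dot q$ of the Lagrangian $\tfrac12\abs{\dot q_t + \nabla V(q_t)}^2$ obtained from~\eqref{eq:oc} by eliminating $b$ through the constraint, see Prop.~\ref{prop:equivalencegeometricmodels}.) By the Legendre duality between the Lagrangian and Hamiltonian formulations, a curve $(q,b)$ satisfies the constraint together with~\eqref{eq:eulerlagrangeoc} if and only if $\bigl(q,\flat(b)\bigr)$ is an integral curve of the Hamiltonian flow of $H$, i.e.
\[
\dot q^i = \frac{\partial H}{\partial p_i} = g^{ij}p_j - g^{ij}\partial_j V , \qquad
\dot p_i = -\frac{\partial H}{\partial q^i} = -\tfrac12\,(\partial_i g^{jk})\, p_j p_k + \partial_i\!\bigl(g^{jk}\partial_j V\bigr)\, p_k .
\]
The first equation reproduces the constraint with $b^i = g^{ij}p_j$, and substituting $p = \phi$ in the second gives precisely~\eqref{eq:eulerlagrangeocphi}.

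It then only remains to read off the statement: Assumptions~\textup{\ref{as:metric}} and~\textup{\ref{as:potential}} assert exactly that the coefficients $\tfrac12(\partial_i g^{jk})$ and $\partial_i(g^{jk}\partial_j V)$ occurring in~\eqref{eq:eulerlagrangeocphi} are constants, independent of $q$. Hence $-\partial H/\partial q^i$, a priori a function of $(q,p)$, is in fact a function of $p$ alone, so~\eqref{eq:eulerlagrangeocphi} is a closed system of first‑order ODEs in the single unknown $\phi$, with no occurrence of $q$. This is the geometric counterpart of the fact that the Hamilton--Jacobi--Bellman equation in~\eqref{eq:eulerlagrangeschroedinger} involves only $\phi$ and not $\rho$.

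One can also avoid any control‑theoretic input and derive~\eqref{eq:eulerlagrangeocphi} by direct computation: expand $(D_tb)^i = \dot b^i + \Gamma^i_{jk}\dot q^j b^k$ and $(\nabla_b\nabla V)^i = b^j\partial_j A^i + \Gamma^i_{jk}b^j A^k$ with $A^i := g^{ij}\partial_j V$, use the constraint $\dot q^j = b^j - A^j$ to eliminate $\dot q$, lower the remaining free index with $g_{mi}$, and simplify using $g_{mi}\Gamma^i_{jk} = \tfrac12(\partial_j g_{mk} + \partial_k g_{mj} - \partial_m g_{jk})$ together with $\partial_m g^{jk} = -g^{ja}g^{kb}\partial_m g_{ab}$ and $g_{mi}A^i = \partial_m V$; the Christoffel terms cancel in a short chain of steps. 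Either way the argument has no genuine obstacle. The one point requiring attention is the index bookkeeping in this second route: several cancellations hinge on the symmetry of $\Gamma^i_{jk}$ in its lower indices and on pairing a $\partial g$ factor with the correct $\nabla V$ factor, and a mishandled relabelling readily produces a spurious factor $2$. The Hamiltonian route sidesteps all of this.
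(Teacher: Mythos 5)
Your proof is correct, but it takes a genuinely different route from the paper's. You pass to the Hamiltonian picture: eliminating $b$ through the constraint (equivalently, maximizing the Pontryagin Hamiltonian in $b$) yields the reduced Hamiltonian $H(q,p)=\tfrac12 g^{jk}(q)p_jp_k-g^{jk}(q)\partial_jV(q)\,p_k$ with $p=\flat(b)=\phi$, and Hamilton's equations $\dot q^i=\partial H/\partial p_i$, $\dot\phi_i=-\partial H/\partial q^i$ reproduce the constraint and~\eqref{eq:eulerlagrangeocphi} in one stroke; your Legendre transform and sign bookkeeping check out. The paper instead transforms the stated geometric equation $D_tb=\nabla_b(\nabla V)$ directly: it lowers indices to $D_t\phi=\nabla_{\sharp\phi}(DV)$, expands in coordinates with Christoffel symbols, eliminates $\dot q$ via the constraint, symmetrizes in the repeated indices, and invokes the identity $\Gamma^k_{ij}g^{j\ell}+\Gamma^\ell_{ij}g^{jk}=-\partial_ig^{k\ell}$ --- essentially the ``hands-on alternative'' you sketch at the end. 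Your route buys freedom from the Christoffel-symbol manipulations (the quadratic term $\tfrac12(\partial_ig^{jk})\phi_j\phi_k$ appears automatically from $\partial H/\partial q^i$), at the mild cost of an indirection: you prove that the first-order conditions of~\eqref{eq:oc}, in Hamiltonian form, are~\eqref{eq:eulerlagrangeocphi}, and then identify these with~\eqref{eq:eulerlagrangeoc} via Legendre duality together with the paper's (unproved) Fact that $D_tb=\nabla_b(\nabla V)$ is indeed the Euler--Lagrange equation of~\eqref{eq:oc}; the paper's computation instead converts the displayed equation itself, pointwise, without reference to its variational origin. Since the proposition is stated relative to~\eqref{eq:eulerlagrangeoc} as given in the paper, this reliance is legitimate and not a gap, but it is the one place where your argument leans on an unproven statement rather than reproving it.
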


We delay the proof of this proposition until Section~\ref{sec:proofs}. Note that the assumptions~\textup{\ref{as:metric}} and~\textup{\ref{as:potential}} are not exactly geometric in nature because they involve a coordinate system. This seems to indicate that the expression of the entropy in \emph{Eulerian coordinates} plays a role in the existence of the Hopf--Cole transformation. However, while the transformation in Theorem~\eqref{th:mainthm} relies on coordinates, the resulting equations on $\eta$ and $\eta^*$ are themselves geometric (a backward and a forward gradient flow).

\subsubsection*{Background on calculus in Wasserstein space}
As preparation for stating the next section's correspondence principle, let us review some concepts introduced by Lott in~\cite{lott08} about calculus in Wasserstein space. First a definition:
\begin{definition} \label{def:wassersteinspace}
The Wasserstein space $\calP(\Rd)$ is the space of Borel probability measures on $\Rd$, equipped with the Wasserstein metric $W_2$. 

Sometimes it is needed to work instead with the subspace
\[
\calP^{\infty}(\Rd) = \set{\rho \, dx \,\middle|\, \rho\in C^{\infty}(\Rd),\, \rho > 0,\, \int\rho\,dx  =1}
\]
\end{definition}

Given $f\in C^{\infty}(\Rd)$ Lott defines $F_f\in C^{\infty}(\calP(\Rd))$ by
\[
F_f(\rho) = \int f(x)\rho(x)\,dx
\]
The role of the functions $F_f$ is close to the one of \emph{Eulerian coordinates} (think of $f = \delta_x$ for some point $x\in\Rd$). Additionally Lott defines a vector field $V_f$ on the Wasserstein space $\calP(\Rd)$ such that for all $F\in C^{\infty}\big(\calP(\Rd)\big)$,
\[
\big(V_f F\big)(\rho) = \left.\frac{d}{d\eps} \right|_{\eps=0} F\Big(\rho - \eps \div(\rho\nabla f)\Big)
\]
The Riemannian metric on the Wasserstein space $\calP(\Rd)$ is then given by

\begin{equation} \label{eq:wassersteinmetriclott}
\bracket{V_f,V_g}_{\rho} = \int \nabla f \cdot \nabla g\, \rho\,dx = -\int\div(\rho\nabla f)\,g\,dx
\end{equation}

\subsubsection*{Correspondence result}
We would like now to bridge the gap between Theorem~\ref{th:mainthm} (which deals with the geometric problems) and the optimal transport setting. Without loss of generality, we will focus on the optimal control problems (i.e.~\eqref{eq:oc} in the geometric setting and~\eqref{eq:SB} in the optimal transport setting) --- recall that  their Euler--Lagrange equations are equivalent to those of the Lagrangian mechanics problems (i.e.~\eqref{eq:m} and~\eqref{eq:Y}). First, as in Prop.~\ref{prop:elnox}, it is a good idea to consider instead of the vector $b\in TM$ the covector $\phi=\flat(b) = g(b,\cdot)$ obtained by lowering indices. The correspondence principle below explains precisely why the Euler--Lagrange equation~\eqref{eq:eulerlagrangeocphi} of~\eqref{eq:oc} 
\[
\dot\phi_i + \tfrac{1}{2} \, (\partial_ig^{jk})\phi_j\phi_k = \partial_i(g^{jk}\partial_jV)\phi_k
\]
corresponds to the Hamilton--Jacobi--Bellman equation in~\eqref{eq:eulerlagrangeschroedinger}
\[
\partial_t\phi + \frac{1}{2}\abs{\nabla\phi}^2 = -\gamma\laplacian\phi
\]
or why the transformation
\begin{equation*}
\begin{cases}
\quad 2 \partial_i V(\eta) = \phi_j  \\
- 2\partial_i V(\eta^*) = \phi_j - 2 \partial_i V(x)
\end{cases}
\end{equation*}
does indeed correspond to the Hopf--Cole transformation in the Schrödinger bridge problem, which can be written as 
\begin{equation*}
\begin{cases}
\quad 2\gamma\ln\eta = \phi \\
-2\gamma\ln\eta^* = \phi - 2\gamma\ln\rho

\end{cases}
\end{equation*}

\begin{correspondence} \label{prop:linkoptimaltransportgeometry2}
Consider the geometric setting in the special case where 
\begin{enumerate}[1)]
\item the manifold $(M,g)$ is taken to be the subspace $\calP^{\infty}(\Rd)$ (see def.~\ref{def:wassersteinspace})
\item the potential $V$ is the entropy $\calS(\rho) = \gamma\int\rho\ln\rho\,dx$
\end{enumerate}
Then the correspondence between the geometric and optimal transport settings is as follows:
\begin{center}
\begin{tabular}{| c  c |} \hline
\textbf{Geometric problems} & \textbf{Optimal transport problems} \\ \hline
point $q\in M$ & probability density $\rho$ \\ \hline
index $i$, $1\le i\le n$ & Eulerian point $x\in\Rd$ \\ \hline
coordinate $q^i$ & Eulerian coordinate $\rho(x)$ \\ \hline
$\partial_i V(q)$ & $\gamma\ln\rho(x)$ \\ \hline
$d\phi / dt$ & $\partial_t\phi$ \\ \hline
$(\partial_ig^{jk})\phi_j\phi_k$ & $\abs{\nabla\phi}^2(x)$ \\ \hline
$\partial_i(g^{jk}\partial_jV)\phi_k$ & $-\gamma\laplacian\phi(x)$ \\ \hline
\end{tabular}
\end{center}

In particular, the assumptions~\ref{as:metric} and~\ref{as:potential} as well as the two assumptions in Theorem~\ref{th:mainthm} are verified.
\end{correspondence}

\begin{proof}
In Lott's notation, the expression of the Wasserstein space metric is given by~\eqref{eq:wassersteinmetriclott}. 
In particular, if $(\rho_t)$ is a time-dependent density, the square norm of its velocity is 
\begin{equation} \label{eq:eqproof1}
\int\partial_t\rho\,\phi\,dx
\end{equation}
where $\phi$ is defined (up to an additive constant) by $-\div(\rho\nabla\phi) = \partial_t\rho$. On a general Riemannian manifold $M$, if $(q_t)$ is a path then the square norm $\abs{\dot{q}}^2_{q}$ of its velocity is given in coordinates by 
\begin{equation} \label{eq:eqproof2}
\dot{q}^i \, g_{ij}\dot{q}^j
\end{equation}
Comparing the two expressions, we deduce that the term $\phi$ in~\eqref{eq:eqproof1} corresponds to the term $g_{ij}\dot{q}^j$ in~\eqref{eq:eqproof2}. In other words, given a path $(\rho_t)$ in the Wasserstein space equipped with Eulerian coordinates, the lowering of indices of the velocity $\partial_t\rho(x)$ is given by $\phi(x)$. Thus $\phi$ can be seen as a covector in Eulerian coordinates. 

We now prove the penultimate line of the table. The previous discussion implies that the square norm $\abs{\dot{q}}^2_{q}$ of the velocity can be written in terms of $\phi_i:=g_{ij}(q)\,\dot{q}^j$ on a manifold $M$ as
\[
g^{jk}(q)\phi_j\phi_k
\]
or on the Wasserstein space as
\[
\int\abs{\nabla\phi}^2\,\rho\,dx
\]
Keeping $\phi$ constant in coordinates (resp. Eulerian coordinates) and taking derivatives with respect to $q$ (resp. $\rho$) we can conclude that the term $\partial_ig^{jk} \phi_j\phi_k$ corresponds to $\abs{\nabla\phi}^2(x)$. 

The last line of the table can be proven in a similar fashion. When the potential $V=V(q)$ is taken to be the entropy $\calS(\rho)=\gamma\int\rho\ln\rho$ on the Wasserstein space, taking derivatives with respect to $q$ (resp. $\rho$) implies that the term $\partial_jV(q)$ corresponds to $\gamma\ln\rho(x)$. By raising indices, the term $g^{jk}\,\partial_jV(q)\,\phi_k$ corresponds to $\int \nabla(\gamma\ln\rho)\cdot \nabla \phi \,\rho\,dx = \int\gamma\nabla\rho\cdot\nabla\phi\,dx = -\int \gamma\laplacian \phi \,\rho\,dx$. Finally, taking derivatives with respect to $q$ (resp. $\rho$) and keeping $\phi$ constant in coordinates (resp. Eulerian coordinates) implies that 
\[
\partial_i\big(g^{jk}\,\partial_jV(q)\big)\,\phi_k
\]
corresponds to 
\[
-\gamma\laplacian\phi(x)
\]
Therefore assumptions~\ref{as:metric} and~\ref{as:potential} and the assumptions in Theorem~\ref{th:mainthm} are verified, at least on the subspace $\calP^{\infty}(\Rd)$ (defined in def.~\ref{def:wassersteinspace}).

Indeed, the quantities $\abs{\nabla\phi}^2(x)$ and $-\gamma\laplacian\phi(x)$ do not depend on $\rho$, thus Assumption~\ref{as:metric} and~\ref{as:potential} are verified. Additionally, the transformation $\phi\to\eta$ given by $2\gamma\ln\eta = \phi$ does define $\eta$ uniquely, thus the first assumption of Theorem~\ref{th:mainthm} is verified. Finally, taking two derivatives of the potential in coordinates (resp. the entropy in Eulerian coordinates), we see that the Hessian $(\partial_{ij}^2V)_{ij}$ in Theorem~\ref{th:mainthm} corresponds to pointwise multiplication by  $\frac{1}{\rho(x)}$, or more precisely corresponds to the infinite-dimensional matrix $\left(\frac{1}{\rho(x)} \delta_{x=y}\right)_{xy}$ which is injective. Thus the second assumption of Theorem~\ref{th:mainthm} is also verified.
\end{proof}

\subsection{Proofs} \label{sec:proofs}
In this section, we prove Prop.~\ref{prop:elnox} and Theorem~\ref{th:mainthm}.

\begin{proof}[Proof of Prop.~\ref{prop:elnox}]
We begin by defining $\phi = \flat(b) = g(b,\cdot)$ where $\flat$ is the flat operator from differential geometry which lowers indices: in coordinates $\phi_i = g_{ij}b^j$. Recall now that the Euler--Lagrange equations~\eqref{eq:eulerlagrangeoc} are given on the manifold $M$ by $D_tb = \nabla_b(\nabla V)$. Since the covariant derivative and the musical isomorphisms $\flat$ and $\sharp$ commute, the switch to $\phi$ variables is given by
\[
D_t\phi = \nabla_{\sharp\phi}(DV)
\]
where $DV$ is the differential of $V$ (a covector), and $\sharp\colon T^*M\to TM$ denotes the sharp operator from differential geometry which raises indices. In coordinates, this last equation can be written
\[
\dot{\phi_i} - \Gamma^k_{ij}\,\dot{q}^j\phi_k = g^{j\ell}\phi_\ell(\partial_{ij}V - \Gamma^k_{ij}\, \partial_kV)
\]
where the $\Gamma^k_{ij}$'s denote the Christoffel symbols of the second kind. The velocity $\dot{q}$ can be written in terms of $\phi\,$: $\dot{q}^j = g^{j\ell}(\phi_\ell-\partial_{\ell}V)$. This gives the equation
\[
\dot{\phi_i} - \Gamma^k_{ij}\,g^{j\ell}\phi_k\phi_\ell  = g^{j\ell}\phi_\ell\partial_{ij}V - \Gamma^k_{ij}\,g^{j\ell} (\phi_\ell \partial_kV + \phi_k\partial_{\ell}V)
\]
Note that both terms involving the quantity $\Gamma^k_{ij}\,g^{j\ell}$ are symmetric in $k$ and $\ell$, thus we can write the symmetrized version
\[
\dot{\phi_i} - \frac{1}{2}(\Gamma^k_{ij}\,g^{j\ell} + \Gamma^\ell_{ij}\,g^{jk})\, \phi_k\,\phi_\ell  = g^{j\ell}\phi_\ell\,\partial_{ij}V -\frac{1}{2}(\Gamma^k_{ij}\,g^{j\ell} + \Gamma^\ell_{ij}\,g^{jk}) (\phi_\ell \partial_kV + \phi_k\partial_{\ell}V)
\]
and we can then break the symmetry in the RHS:
\[
\dot{\phi_i} - \frac{1}{2}(\Gamma^k_{ij}\,g^{j\ell} + \Gamma^\ell_{ij}\,g^{jk})\, \phi_k\,\phi_\ell  = g^{j\ell}\phi_\ell\partial_{ij}V -(\Gamma^k_{ij}\,g^{j\ell} + \Gamma^\ell_{ij}\,g^{jk}) \phi_\ell \partial_kV 
\]
Finally, we can simplify $\Gamma^k_{ij}\,g^{j\ell} + \Gamma^\ell_{ij}\,g^{jk} = -\partial_ig^{k\ell}$, so that we may write
\[
\dot{\phi_i} + \frac{1}{2}\partial_ig^{k\ell} \phi_k\phi_\ell  = g^{j\ell}\phi_\ell\partial_{ij}V +\partial_ig^{k\ell} \phi_\ell \partial_kV 
\]
Rearranging indices and factoring the RHS leads to the desired expression
\[
\dot\phi_i + \frac{1}{2} \, \partial_ig^{jk}\phi_j\phi_k = \partial_i(g^{jk}\partial_jV)\phi_k
\]
\end{proof}

\begin{proof}[Proof of Theorem~\ref{th:mainthm}]
In this proof we use the covector $\phi = \flat(b)$ instead of the vector $b$ like in the proof of Prop.~\ref{prop:elnox}. Because of the assumptions made in the theorem, the transformation 
\begin{equation*}
2 \, \partial_i V(\eta) = \phi_i 
\end{equation*}
defines a bijection $\colon\Rn\to\Rn, (\phi_i)\to(\eta^i)$. Taking time-derivatives of this equation and using the expression of $\dot{\phi}$ from Prop.~\ref{prop:elnox}, we get
\[
2\,\partial^2_{ij}V(\eta)\,\dot{\eta}^j = -\frac{1}{2} \, \partial_ig^{jk}\phi_j\phi_k + \partial_i(g^{jk}\partial_jV)\phi_k
\]
and replacing $\phi$ by its expression in terms in $\eta$ yields
\[
2\,\partial^2_{ij}V(\eta)\,\dot{\eta}^j = -2 \, \partial_ig^{jk}\partial_jV(\eta)\,\partial_kV(\eta) + 2\partial_i(g^{jk}\partial_jV)\partial_kV(\eta)
\]
Now we make use of the assumptions~\ref{as:metric} and~\ref{as:potential} to write $\partial_ig^{jk} = \partial_ig^{jk}(\eta)$ and $\partial_i(g^{jk}\partial_jV) = \partial_i(g^{jk}\partial_jV)(\eta) = \partial_i g^{jk}\,\partial_jV(\eta) + g^{jk}(\eta)\,\partial^2_{ij}V(\eta)$. Two terms in the RHS cancel and is only left
\[
\partial^2_{ij}V(\eta)\,\dot{\eta}^j =  \partial^2_{ij}V(\eta) \, g^{jk}(\eta)\partial_kV(\eta)
\]
Since we assumed that the matrix $\big(\partial^2_{ij}V(\eta)\big)_{ij}$ was injective, the desired result on $\eta$ follows.

The proof for $\eta^*$ is nearly identical, up to a minus sign, once we remark that $D_t\phi^* = -\nabla_{\sharp\phi^*}(DV)$ where $\phi^* = \phi - 2DV(q)$.

\end{proof}

\section*{Acknowledgments}
The author would like to thank Alfred Galichon for introducing him to the subject and constant interest in this work, Robert V. Kohn for very helpful comments and Montacer Essid for fruitful discussions.

\bibliographystyle{amsalphaurl}
\bibliography{../biblio/bib/reg_ot}

\end{document}